\documentclass{amsart}
\usepackage{amsthm}
\pdfoutput=1
\usepackage{amssymb}
\usepackage{amsfonts}
\usepackage{mathrsfs}
\usepackage[all,cmtip]{xy}
\usepackage{tikz}     
\usepackage{multirow}
\usepackage{url}

\raggedbottom

\setcounter{secnumdepth}{1}

\begin{document}

\date{}
\title[The very good property for parabolic bundles]{The very good property for parabolic vector bundles over curves}
\author{Alexander Soibelman}
\address{Max Planck Institute for Mathematics, Vivatsgasse 7,
53111 Bonn, Germany}
\email{asoibel@mpim-bonn.mpg.de}

\newtheorem{thm}{Theorem}[section]
\newtheorem{defn}[thm]{Definition}
\newtheorem{lmm}[thm]{Lemma}
\newtheorem{prp}[thm]{Proposition}
\newtheorem{conj}[thm]{Conjecture}
\newtheorem{exa}[thm]{Example}
\newtheorem{cor}[thm]{Corollary}
\newtheorem{que}[thm]{Question}
\newtheorem{ack}[thm]{Acknowledgments}
\newtheorem{clm}[thm]{Claim}
\newtheorem{rmk}[thm]{Remark}

\newcommand{\dimn}{\operatorname{dim}}

\begin{abstract}
The purpose of this note is to extend Beilinson and Drinfeld's ``very good" property to moduli stacks of parabolic vector bundles on curves of genuses $g = 0$ and $g = 1$.  Beilinson and Drinfeld show that for $g > 1$ a trivial parabolic structure is sufficient for the moduli stacks to be ``very good".  We give a necessary and sufficient condition on the parabolic structure for this property to hold in the case of lower genus.     
\end{abstract}

\maketitle

\section{Introduction}

In \cite{BD1991} Beilinson and Drinfeld introduced the ``good'' and ``very good'' properties for algebraic stacks to avoid derived categories in their study of $D$-modules on the moduli stack of $G$-bundles $\textrm{Bun}_G(X)$ on a curve $X$.  An equidimensional algebraic stack $\mathcal{Y}$ is \textit{good} if 
$$
\textrm{codim } \{y \in \mathcal{Y}|\textrm{Aut}(y) = n \} \ge n \quad \forall n > 0.
$$ 
It is \textit{very good} if the inequality on the codimension is strict.  Both the good and the very good properties have certain consequences for the geometry of the cotangent stack associated to $\mathcal{Y}$.  

Namely, the good property implies there is a reasonable definition of the cotangent stack $T^*\mathcal{Y}$, without passing to derived objects (see e.g. \cite{Ra2009} for detailed discussion), while the very good property also provides an open, dense Deligne-Mumford substack in $T^*\mathcal{Y}$ coming from the points with dimension $0$ automorphism groups.

Beilinson and Drinfeld prove that $\textrm{Bun}_G(X)$ is very good for semisimple, complex $G$ and smooth, connected, projective $X$ of genus $g > 1$.  In the lower genus cases, this is no longer true.  One possible way to get around this is to introduce additional structure to ``decrease'' the number of points with nontrivial automorphisms. 

In this paper, we will be dealing with the case of vector bundles (i.e. when $G = GL(n)$).  In order to make the corresponding moduli stacks good or very good we will consider vector bundles together with additional \textit{parabolic structure}.  Defined by Mehta and Seshadri in \cite{MS1980}, a \textit{parabolic bundle} $\mathbf{E}$ is a vector bundle $E$ together with a collection of partial flags $E_{x_i} \supset E_{i1} \supset \cdots \supset E_{iw_i} = 0$ in the fibers over points in the reduced effective divisor $D = x_1 + \cdots + x_k$ (note that some texts call this a \textit{quasi-parabolic bundle}).  

Let $w = (w_1, \dots, w_k)$. We call $(D,w)$ the \textit{weight type} of $\mathbf{E}$.  If $\alpha_ 0 = \textrm{rk } E$ and $\alpha_{ij}  = \dimn E_{ij}$, then $\alpha = (\alpha_0, \alpha_{ij})$ is called the \textit{dimension vector} of $\mathbf{E}$. 

Note that by analogy with vector bundles, one can define morphisms between parabolic bundles $\mathbf{E}$ and $\mathbf{F}$ of the same weight type, as well as the locally free sheaf $\mathscr{H}om_{\textrm{Par}}(\mathbf{E}, \mathbf{F})$ of parabolic bundle morphisms.

We will need the bilinear \textit{Euler form}, which may be expressed as follows on dimension vectors:
$$
\langle \alpha, \beta  \rangle = \alpha_0\beta_0 + \sum_{1 \le i \le k,\\ 0\le j \le w_i - 1} (\beta_{ij+1} - \beta_{ij}) \alpha_{ij+1},
$$
where $\alpha_{i0} = \alpha_0$ and $\beta_{i0} = \beta_0$.  We denote $q(\alpha) := \langle \alpha, \alpha \rangle$ and $p(\alpha) = 1 - q(\alpha)$.

In \cite{So2013} we provided sufficient conditions for the moduli stack of parabolic bundles over $\mathbb{P}^1$ to be very good and showed this has interesting applications to the additive and multiplicative Deligne-Simpson problems (see e.g. \cite{CB2003}, \cite{CBS2006} \cite{Ko22004}, \cite{Si1991}).  The purpose of this note is to formulate necessary and sufficient conditions on the dimension vector $\alpha$ for the corresponding moduli stack of parabolic bundles of weight type $(D,w)$ to be good or very good over curves of arbitrary genus.  

Note that $GL(n)$ has a $1$-dimensional central subgroup, which acts by dilation on the fibers of a vector bundle and preserves any flag in that fiber. Therefore, a parabolic bundle always has an automorphism group of at least dimension $1$.  It follows that the moduli stack of parabolic vector bundles cannot be very good according to the original definition.  Therefore we modify it in the next section, to fit this case (similarly done in \cite{So2013}).  

There are two further sections.  In the third section, we formulate and prove the main result concerning the very good property for the moduli stack of parabolic bundles over a curve.  The proofs in Section 3 are based on some technical results we moved to Section 4.  

The contents of the last two sections are largely contained in \cite{So2013}, though we include all of the details from there for the sake of completeness.  However, unlike in \cite{So2013}, computations are done for smooth projective curves of arbitrary genus, and the conditions for the good or very good properties to hold are both necessary and sufficient. 

Note that for the case of genus $g > 1$ and $G = \textrm{GL}(n)$ our results provide an alternative to Beilinson and Drinfeld's proof of the very good property that does not require showing the global nilpotent cone of the Hitchin system is Lagrangian (see e.g. \cite{La1988} for an analogous proof of this).  In the $g = 1$ case, even two points with (arbitrary) nontrivial parabolic structure are sufficient for the corresponding moduli stack to be very good, in our formulation.

\section{The Almost Very Good property}

Since the automorphism group for any vector bundle has a $1$-dimensional subgroup, which preserves flag structure, the default definition of the very good property does not hold for the moduli stack of parabolic bundles on a curve.  Instead, we modify the definition of good and very good stacks, in order to account for this subgroup.  Such a modification is harmless because.

Let $K$ be an algebraically closed field, and let $\mathcal{Y}$ be an equidimensional algebraic stack over $K$. Let $\textrm{Aut}(y)$ the automorphism group of a point $y \in \mathcal{Y}$.  Denote by $\mathcal{Y}^n = \{y \in \mathcal{Y}| \textrm{dim Aut}(y) = n\}$ the reduced locally closed substack of $\mathcal{Y}$.

We say $\mathcal{Y}$ is \textit{almost good} if 
$$
\textrm{codim } \mathcal{Y}^{n + m} \ge n,
$$
for $m = \min_y \textrm{dim Aut}(y)$ and any $n > 0$.  If the inequality is strict, then we say $\mathcal{Y}$ is \textit{almost very good}.

Let $\mathcal{I}_{\mathcal{Y}}$ be the inertia stack associated with $\mathcal{Y}$.  Note that we have the decomposition
$$
\mathcal{I}_{\mathcal{Y}} = \coprod_{i} \mathcal{I}^i,
$$
where $\mathcal{I}^i$ is the locally closed, reduced substack corresponding to points of $\mathcal{Y}$ with $\textrm{min dim Aut}(y) = i$.  We will need the following theorem:
\begin{thm}
\label{2.1}
The stack $\mathcal{Y}$ is almost good if and only if $\dimn \coprod_{i} \mathcal{I}^{i>m} - m \le \dimn  \mathcal{Y}$.  If the inequality if strict, then it is almost very good.
\end{thm}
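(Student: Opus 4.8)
The plan is to translate both the definition of almost good and the inertia-stack inequality into statements about the dimensions of the automorphism strata $\mathcal{Y}^i$, and then to observe that the two coincide after a reindexing. The bridge between the two sides is the natural projection $\pi \colon \mathcal{I}_{\mathcal{Y}} \to \mathcal{Y}$, whose fiber over a point $y$ is the automorphism group scheme $\textrm{Aut}(y)$. Restricting $\pi$ to the stratum $\mathcal{Y}^i$ yields a morphism $\mathcal{I}^i \to \mathcal{Y}^i$ all of whose fibers have dimension $i$, so that
$$
\dimn \mathcal{I}^i = \dimn \mathcal{Y}^i + i.
$$
First I would establish this identity, since it is the one genuinely geometric input; the remainder of the argument is bookkeeping.

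Next, using that $\mathcal{Y}$ is equidimensional, I would write $\textrm{codim } \mathcal{Y}^{n+m} = \dimn \mathcal{Y} - \dimn \mathcal{Y}^{n+m}$, so that the almost good condition $\textrm{codim } \mathcal{Y}^{n+m} \ge n$ for every $n > 0$ becomes
$$
\dimn \mathcal{Y}^{n+m} + n \le \dimn \mathcal{Y} \quad \text{for all } n > 0.
$$
Reindexing by $i = n + m$, so that $n > 0$ corresponds exactly to $i > m$, and substituting the identity above, this reads $\dimn \mathcal{I}^i - m \le \dimn \mathcal{Y}$ for all $i > m$. Since the dimension of a disjoint union of locally closed substacks is the supremum of the dimensions of its pieces, we have $\dimn \coprod_{i > m} \mathcal{I}^i = \max_{i > m} \dimn \mathcal{I}^i$, and taking the maximum over $i > m$ of the inequalities on each stratum gives precisely the asserted criterion. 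The almost very good case follows by running the same equivalence with each $\le$ replaced by $<$; here one uses that $\mathcal{Y}$ is of finite type, so that only finitely many strata are nonempty and strictness transfers between the individual strata and their maximum. (When $m$ is also the maximal automorphism dimension, so that $\coprod_{i>m}\mathcal{I}^i$ is empty, both conditions hold vacuously under the convention $\dimn \emptyset = -\infty$.)

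The main obstacle is the dimension identity $\dimn \mathcal{I}^i = \dimn \mathcal{Y}^i + i$: one must verify that, after accounting for the stacky structure of the inertia, the relative dimension of $\pi$ over each stratum really equals the automorphism dimension there. I expect the cleanest verification is through a local quotient presentation $\mathcal{Y} = [X/G]$, under which $\mathcal{I}_{\mathcal{Y}} = [\{(x,g) : gx = x\}/G]$ with $G$ acting by conjugation; the fiber of $\{(x,g) : gx = x\} \to X$ over $x$ is the stabilizer $\textrm{Stab}(x)$, of dimension $i$ on the relevant stratum, and dividing by $\dimn G$ on both source and target yields the identity directly.
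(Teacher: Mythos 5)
Your proposal is correct and follows essentially the same route as the paper: the key identity $\dimn \mathcal{I}^i = \dimn \mathcal{Y}^i + i$ (which the paper asserts as ``easy to see'' and you justify via the inertia projection), followed by the reindexing $i = n+m$ of the codimension condition. Your added care about the supremum over strata, strictness, and the vacuous case only fills in bookkeeping the paper leaves implicit.
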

\begin{proof}
It is easy to see that $\dimn \mathcal{I}^i =  \dimn \mathcal{Y}^i + i$.  Therefore, we have
$$
\textrm{codim } \mathcal{Y}^{n + m} = \dimn \mathcal{Y} - \dimn \mathcal{Y}^{n+m} = \dimn 
\mathcal{Y} - \dimn \mathcal{I}^{n + m} + (n+m), 
$$  
for all $n > 0$.  Thus the almost good property is equivalent to 
$$
\dimn \mathcal{Y} \ge \dimn \mathcal{I}^{n+m} - m,
$$
for all $n > 0$.  The theorem statement follows.  Making the corresponding inequality strict, we also obtain the statement about the very good property.
\end{proof}

\section{The Almost Very Good property for Parabolic Bundles}

As before, let $X$ be a smooth projective curve of genus $g$.  As in the Introduction, we will fix parabolic weight type $(D,w)$ and dimension vector $\alpha = (\alpha_0, \alpha_{ij})$.  

Denote by $\mathcal{B} := \textrm{Bun}_{D,w,\alpha}(X)$ the moduli stack of parabolic bundles of weight type $(D,w)$ and dimension vector $\alpha$ on $X$.  Let $\mathcal{I}_{\mathcal{B}}$ be the corresponding inertia stack.  Furthermore, let $\mathcal{P}_\mathcal{B}:= \mathcal{P}(D,w,\alpha)$ be the moduli stack parametrizing pairs $(\mathbf{E}, f)$, where $\mathbf{E} \in \textrm{Bun}_{D,w,\alpha}(X)$ and $f \in \textrm{End}_{\textrm{Par}}(\mathbf{E})$.  Finally, let $\mathcal{N} (D,w,\alpha)$ be the closed, reduced substack of $\mathcal{P}_{\mathcal{B}}$ for which $f$ is nilpotent.  

The stacks $\textrm{Bun}_{D,w,\alpha}(X)$, $\mathcal{I}_\mathcal{B}$, $\mathcal{P}_\mathcal{B}$, and $\mathcal{N}(D,w,\alpha)$ all have standard presentations as algebraic stacks.  Moreover, $\textrm{Bun}_{D,w,\alpha}(X)$ is smooth.

By the Riemann-Roch theorem, we have for $\mathbf{E}$ and $\mathbf{F} \in \textrm{Bun}_{D,w,\alpha}(X)$: 
\begin{align*}
\chi(\mathscr{H}om_{\textrm{Par}}(\mathbf{F}, \mathbf{E})) & = \textrm{deg}(\mathscr{H}om_{\textrm{Par}}(\mathbf{F}, \mathbf{E})) + (1-g)\alpha_0\beta_0 \\ & = \beta_0 \cdot \textrm{deg}(E) - \alpha_0 \cdot \textrm{deg}(F) - g\alpha_0\beta_0 + \langle \beta, \alpha  \rangle,
\end{align*}
where $\langle \beta, \alpha \rangle$ is the Euler form, mentioned in the introduction.  Consequently, we have that 
$$
\dimn \textrm{Bun}_{D,w,\alpha}(X) =  - \chi(\mathscr{E}nd_{\textrm{Par}}(\mathbf{E})) = g \cdot \alpha_0^2 - q(\alpha).
$$

Note that in the case of $\textrm{Bun}_{D,w,\alpha}(X)$ we have $\dimn \textrm{End}_{\textrm{Par}}(\mathbf{E}) = \dimn \textrm{Aut}(\mathbf{E})$, therefore we can replace the inertia stack with $\mathcal{P}_{\mathcal{B}}$ in Theorem \ref{2.1}.  This lets us reduce the necessary computations for $\mathcal{I}_{\mathcal{B}}$ to computations for $\mathcal{N}(D,w,\alpha)$.  Specifically, we have the following lemma:
\begin{lmm}
\label{3.1}
There exists a decomposition into nonnegative dimension vectors $\alpha = \sum_{l=1}^r \gamma^{(l)}$ such that $\dimn \mathcal{P}_{\mathcal{B}} = r + \sum_{l=1}^r \dimn \mathcal{N}(D,w,\gamma^{(l)})$.
\end{lmm}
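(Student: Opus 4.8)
The plan is to stratify $\mathcal{P}_{\mathcal{B}}$ according to the generalized eigenspace decomposition of the endomorphism $f$ and to compute the dimension of each stratum. The starting observation is that for a pair $(\mathbf{E},f)$ the coefficients of the characteristic polynomial of $f$ are global regular functions on the connected projective curve $X$, hence constant; so $f$ has finitely many eigenvalues $\lambda_1,\dots,\lambda_r \in K$, each independent of the point of $X$. Writing the characteristic polynomial as $\prod_l (t-\lambda_l)^{m_l}$, I would use a partial-fraction (Chinese Remainder) argument to produce polynomials $p_l$ for which $\pi_l := p_l(f)$ is the idempotent projecting onto the generalized $\lambda_l$-eigenbundle $E_l := \textrm{im}(\pi_l)$.

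Since each $\pi_l$ is a polynomial in $f$, it is itself a parabolic endomorphism, so the projectors are automatically compatible with the flags at the points of $D$. Consequently $\mathbf{E}$ decomposes as a direct sum of parabolic subbundles $\mathbf{E} = \bigoplus_{l=1}^r \mathbf{E}_l$, where $\mathbf{E}_l$ has some dimension vector $\gamma^{(l)}$ with $\alpha = \sum_l \gamma^{(l)}$, and $f$ preserves each summand with $f|_{\mathbf{E}_l} - \lambda_l$ nilpotent. Hence each $(\mathbf{E}_l, f|_{\mathbf{E}_l} - \lambda_l)$ defines a point of $\mathcal{N}(D,w,\gamma^{(l)})$. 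As the projectors vary algebraically with $f$, this construction runs in families, and so for each discrete type---a number $r$ together with a decomposition $\alpha = \sum_l \gamma^{(l)}$---it produces a locally closed substack $\mathcal{P}^{(r,\gamma)} \subset \mathcal{P}_{\mathcal{B}}$.

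Next I would identify each stratum with a product. Because the eigenspace decomposition is canonical, recovering the $\mathbf{E}_l$, the nilpotent parts, and the eigenvalues from $(\mathbf{E},f)$ is functorial, while conversely the data reassemble into a single pair. This should exhibit $\mathcal{P}^{(r,\gamma)}$ as the quotient, by a finite relabeling symmetry of coinciding $\gamma^{(l)}$, of the product of $\prod_l \mathcal{N}(D,w,\gamma^{(l)})$ with the configuration space of $r$ distinct eigenvalues, the latter being an open subset of $\mathbb{A}^r$ of dimension $r$. Moreover $\textrm{Aut}(\mathbf{E},f) = \prod_l \textrm{Aut}(\mathbf{E}_l, f|_{\mathbf{E}_l})$, matching the automorphisms on the product side, so the identification is one of algebraic stacks and the finite symmetry leaves dimensions unchanged. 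It follows that $\dimn \mathcal{P}^{(r,\gamma)} = r + \sum_{l=1}^r \dimn \mathcal{N}(D,w,\gamma^{(l)})$.

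Finally, since $\mathcal{P}_{\mathcal{B}}$ is the union of the finitely many strata $\mathcal{P}^{(r,\gamma)}$, its dimension is the maximum of their dimensions; choosing the type $(r,\gamma)$ realizing this maximum yields the desired decomposition $\alpha = \sum_l \gamma^{(l)}$ with $\dimn \mathcal{P}_{\mathcal{B}} = r + \sum_l \dimn \mathcal{N}(D,w,\gamma^{(l)})$. The main obstacle I anticipate is making the third step precise at the level of stacks rather than points: verifying that the spectral projectors, and hence the entire eigenspace decomposition together with its compatibility with the parabolic flags, behave well in arbitrary families over a base, so that both the stratification and the product identification are genuine morphisms of algebraic stacks.
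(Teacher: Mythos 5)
Your proposal is correct and follows essentially the same route as the paper: stratify $\mathcal{P}_{\mathcal{B}}$ by the (constant) characteristic polynomial of $f$, decompose $\mathbf{E}$ into generalized eigen-subbundles so that each factor lands in a nilpotent stack $\mathcal{N}(D,w,\gamma^{(l)})$, and account for the extra $r$ via the $r$-dimensional space of polynomials with $r$ distinct roots. The paper phrases this as a dimension count along the fibers of the map $c:\mathcal{P}_{\mathcal{B}}\rightarrow\mathbb{A}^{\alpha_0}$ rather than via your explicit spectral projectors and product-stratum identification, but these are presentational differences; if anything, you supply details (flag compatibility of the projectors, matching of automorphism groups) that the paper leaves implicit.
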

\begin{proof}
Fix a point of $x \in \mathbb{A}^{\alpha_0}$.  This defines a characteristic polynomial $x(t)= (t -  \lambda_1)^{m_1}(t - \lambda_2)^{m_2} \cdots (t - \lambda_r)^{m_r}$, where $\lambda_i \neq \lambda_j$ for $i \neq j$ (notice that the eigenvalues do not depend on the point of $X$).  Let $c: \mathcal{P}_{\mathcal{B}} \rightarrow \mathbb{A}^{\alpha_0}$ be the morphism defined by sending the pair $(\mathbf{E}, f)$ to the coefficients of the characteristic polynomial $\textrm{char}(f)$ of $f$.  Consider $(\mathcal{P}_{\mathcal{B}})_x$, the fiber of $c$ over $x$.  The points of $(\mathcal{P}_{\mathcal{B}})_x$
may be identified with pairs $(\mathbf{E}, f)$, such that $f$ is an endomorphism of the parabolic bundle
$\mathbf{E}$ with $\textrm{char}(f) = x(t)$.  Therefore, $\mathbf{E}$ decomposes as 
$$
\mathbf{E} = \bigoplus_l \textbf{ker}(f-\lambda_l)^{m_i}.
$$
Let $(\mathcal{P}_{\mathcal{B}})_x$ be the fiber of $c$ over $x$.  We have $ \dimn (\mathcal{P}_{\mathcal{B}})_x =  \dimn \prod_l \mathcal{P}_l$,  where 
$\mathcal{P}_l$ is the stack of pairs $(\mathbf{E}_l, f_l)$ such that $\mathbf{E}_l$ is a parabolic bundle and $f_l$ is its endomorphism satisfying $\textrm{char}(f_l) = (t - \lambda_l)^{m_l}$.  Since $f_l - \lambda_l$
is nilpotent, we can compute 
$$\dimn \mathcal{P}_l = \textrm{dim } \mathcal{N}(D,w,\gamma^{(l)}),$$
for some dimension vector $\gamma^{(l)} \le \alpha$.  Note that $\alpha = \gamma^{(1)} + \cdots + \gamma^{(r)}$.  Since $c$ maps $(\mathcal{P}_{\mathcal{B}})_x$ to the subvariety consisting of monic polynomials with $r$ distinct roots, we can compute:
$$
\dimn \mathcal{P}_{\mathcal{B}} = r + \sum_{l=1}^r \dimn \mathcal{N}(D,w,\gamma^{(l)}),
$$
for some decomposition $\alpha = \sum_{l=1}^r \gamma^{(l)}$ into nonnegative dimension vectors. 
\end{proof}

We now wish to compute the dimension of $\mathcal{N}(D,w,\alpha)$.
\begin{thm}
\label{3.2}
There exists a decomposition into nonnegative dimension vectors $\alpha = \sum_{l=1}^s \beta^{(l)}$ such that $\dimn \mathcal{N}(D,w,\alpha) = g \cdot \sum_{l = 1}^s (\beta^{(i)}_0)^2 - \sum_{l = 1}^s q(\beta^{(i)})$.
\end{thm}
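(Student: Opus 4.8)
The plan is to compute $\dimn\mathcal{N}(D,w,\alpha)$ by stratifying according to nilpotency type. For a nilpotent $f\in\textrm{End}_{\textrm{Par}}(\mathbf{E})$, the kernels $\ker f^i$ (saturated to parabolic subbundles) form a filtration $0 = V_0\subset V_1\subset\cdots\subset V_s = \mathbf{E}$ whose graded pieces $G_i := V_i/V_{i-1}$ have dimension vectors $\delta^{(i)}$ with $\alpha = \sum_i\delta^{(i)}$, and the induced maps $G_i\to G_{i-1}$ are injective. First I would stratify $\mathcal{N}(D,w,\alpha)$ into locally closed substacks $\mathcal{N}_\delta$ indexed by these types $\delta = (\delta^{(1)},\dots,\delta^{(s)})$, so that $\dimn\mathcal{N}(D,w,\alpha) = \max_\delta\dimn\mathcal{N}_\delta$, and then set $\beta^{(l)} := \delta^{(l)}$ for the type achieving the maximum.

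The core computation is the dimension of a single stratum. Since the filtration $V_\bullet$ is recovered from $f$, the stratum $\mathcal{N}_\delta$ is isomorphic to the total space of the vector bundle $H^0(\mathscr{E}^{\geq 1})$ over the stack $\textrm{Filt}_\delta$ of parabolic bundles equipped with a type-$\delta$ filtration, where $\mathscr{E}^{\geq 1}\subseteq\mathscr{E}nd_{\textrm{Par}}(\mathbf{E})$ is the subsheaf of endomorphisms strictly decreasing the filtration; this fibration is representable, so dimensions add. Writing $\mathscr{E}^{\geq 0}$ for the filtration-preserving endomorphisms, the deformation theory of filtered bundles gives $\dimn\textrm{Filt}_\delta = -\chi(\mathscr{E}^{\geq 0})$, and the short exact sequence $0\to\mathscr{E}^{\geq 1}\to\mathscr{E}^{\geq 0}\to\bigoplus_i\mathscr{E}nd_{\textrm{Par}}(G_i)\to 0$, together with the Riemann--Roch computation already recorded as $\dimn\textrm{Bun}_{D,w,\delta^{(i)}}(X) = -\chi(\mathscr{E}nd_{\textrm{Par}}(G_i))$, would yield, after the term $h^0(\mathscr{E}^{\geq 1})$ from the endomorphism bundle cancels the unipotent automorphisms of the filtration, the identity $\dimn\mathcal{N}_\delta = \sum_{i}\bigl(g(\delta^{(i)}_0)^2 - q(\delta^{(i)})\bigr) + h^1(\mathscr{E}^{\geq 1})$. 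This cancellation is exactly what removes all cross terms between distinct graded pieces and produces a sum of individual $\textrm{Bun}$-dimensions.

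It then remains to show that the maximum over $\delta$ is attained on a stratum for which the correction $h^1(\mathscr{E}^{\geq 1})$ vanishes, so that $\dimn\mathcal{N}_\delta$ reduces to $g\sum_l(\beta^{(l)}_0)^2 - \sum_l q(\beta^{(l)})$. This is the main obstacle. For generic graded pieces, nonemptiness of $\mathcal{N}_\delta$ forces injective parabolic maps $G_i\hookrightarrow G_{i-1}$, which constrains the slopes of the $G_i$; by parabolic Serre duality, $h^1(\mathscr{H}om_{\textrm{Par}}(G_i,G_j))$ for $i>j$ is governed by parabolic homomorphisms $G_j\to G_i$ twisted by the canonical bundle, and one arranges the graded pieces with slopes decreasing in steps larger than $2g-2$ to make these vanish, giving $h^1(\mathscr{E}^{\geq 1}) = 0$ while keeping the stratum nonempty and of maximal dimension. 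Verifying that such a choice is always available, and that the resulting stratum dominates every stratum forced to carry a positive correction—i.e. that spreading the slopes never lowers the dimension—is precisely where the cohomological genericity statements collected in Section 4 are needed. I expect this optimization, rather than the dimension bookkeeping of the previous step, to be where the real work lies.
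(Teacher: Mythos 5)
Your first two paragraphs are essentially sound bookkeeping (indeed $-\chi(\mathscr{E}^{\geq 0}) + h^0(\mathscr{E}^{\geq 1}) = \sum_i\bigl(g(\delta^{(i)}_0)^2 - q(\delta^{(i)})\bigr) + h^1(\mathscr{E}^{\geq 1})$ checks out), but the proof has a genuine gap at exactly the step you flag as ``the real work,'' and the sketch you give for it would not go through. First, the slopes of the graded pieces are not parameters you get to ``arrange'': the degrees of the $G_i$ are part of the data being maximized over, they are constrained by the injections $G_i \hookrightarrow G_{i-1}$ needed for nonemptiness, and both $\dimn \textrm{Filt}_\delta = -\chi(\mathscr{E}^{\geq 0})$ and the fiber dimension $h^0(\mathscr{E}^{\geq 1})$ depend on the degree distribution through $\deg \mathscr{H}om_{\textrm{Par}}(G_i,G_j)$ --- so ``spreading the slopes'' moves you to a \emph{different} stratum whose dimension changes, and you have not shown that a stratum with clustered slopes and $h^1(\mathscr{E}^{\geq 1}) > 0$ cannot dominate. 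Second, $h^1(\mathscr{E}^{\geq 1})$ jumps along $\textrm{Filt}_\delta$, so your displayed identity for $\dimn\mathcal{N}_\delta$ is not even well-defined as written; the correct statement is $\dimn\mathcal{N}_\delta = \max_S\bigl(\dimn S + h^0(\mathscr{E}^{\geq 1})|_S\bigr)$ over jumping loci $S$, a codimension-versus-jump trade-off your genericity argument does not address. Note also that if the maximum were attained with $h^1 > 0$, your formula would give strict excess over $\sum_l\bigl(g(\beta^{(l)}_0)^2 - q(\beta^{(l)})\bigr)$ for the graded type, and you would owe the theorem a \emph{different} decomposition realizing equality --- nothing in the sketch produces one.

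The paper's proof shows that no such optimization is needed: the $h^1$-correction cancels \emph{identically}, not generically. It inducts on the rank, peeling off one kernel layer at a time rather than taking the whole filtration: stratify only by $\beta = \dimn \mathbf{ker}\, f$, consider $\phi: (\mathbf{E},f) \mapsto (\mathbf{H}, f|_{\mathbf{H}})$ with $\mathbf{H} = \mathbf{E}/\mathbf{ker}\, f$, and compute the fiber $\mathcal{X}$ of $\phi$ over a fixed $(\mathbf{H},h)$ by comparing it with the stack $\mathcal{X}' = \mathcal{P}_{\mathbf{F_1}}(D,w,\beta)$ of inclusions of $\mathbf{F_1} = \mathbf{ker}\, h$ into a parabolic bundle of type $\beta$. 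Fiberwise over $\mathbf{F} \in \textrm{Bun}_{D,w,\beta}(X)$, Lemma \ref{4.3.4} gives $\dimn \mathcal{X}_{\mathbf{F}} = \dimn H^1(X,\mathscr{H}om_{\textrm{Par}}(\mathbf{F_1},\mathbf{F}))$, while nonemptiness of $\mathcal{X}'_{\mathbf{F}}$ (from the injection $\mathbf{ker}\, f^2/\mathbf{ker}\, f \hookrightarrow \mathbf{ker}\, f$) gives $\dimn \mathcal{X}'_{\mathbf{F}} = \dimn H^0(X,\mathscr{H}om_{\textrm{Par}}(\mathbf{F_1},\mathbf{F}))$; hence $\dimn \mathcal{X} = \dimn \mathcal{X}' - \chi(\mathscr{H}om_{\textrm{Par}}(\mathbf{F_1},\mathbf{F}))$ at every point, and the deformation-theoretic count of Lemma \ref{4.3.3}, $\dimn \mathcal{X}' = \chi(\mathscr{H}om_{\textrm{Par}}(\mathbf{F_1},\mathbf{F})) - \chi(\mathscr{E}nd_{\textrm{Par}}(\mathbf{F}))$, makes the $h^1$ and $h^0$ terms cancel exactly, leaving $\dimn \mathcal{X} = -\chi(\mathscr{E}nd_{\textrm{Par}}(\mathbf{F})) = g\beta_0^2 - q(\beta)$ on the nose; induction on $\alpha - \beta$ then assembles the decomposition. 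So to repair your argument you would either have to carry out this exact cancellation layer by layer --- at which point you have reproduced the paper's proof --- or genuinely prove the slope-optimization claim, for which you currently have no argument.
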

\begin{proof}
Let $(\mathbf{E}, f)$ be a point of $\mathcal{N}(D,w, \alpha)$.  Let $\mathbf{F} = \mathbf{ker} \ f$ and $\mathbf{H} = \mathbf{E}/ \mathbf{F}$.  We will proceed by induction on the rank of the vector bundle $E$ (note that this is $\alpha_0$ in our notation). 

Define $\mathcal{N}_{\beta}(D,w, \alpha)$ to be the substack of $\mathcal{N}(D,w, \alpha)$ consisting of objects $(\mathbf{E},f)$ such that the corresponding $\mathbf{F}$ belongs to $\textrm{Bun}_{D,w, \beta}(X)$.  Choose $\beta$ such that
$$
\dimn \mathcal{N}(D,w, \alpha) = \dimn \mathcal{N}_{\beta}(D,w, \alpha).
$$
Consider the morphism
$$
\phi: \mathcal{N}_{\beta}(D,w, \alpha) \rightarrow \mathcal{N}(D,w, \alpha - \beta),
$$
which is defined by sending $(\mathbf{E}, f)$ to
$(\mathbf{H}, f|_{\mathbf{H}})$ $\in \mathcal{N}(D, w, \alpha - \beta)$, with corresponding restrictions on the arrows.  In this case, we get by induction that
\begin{align*}
& \textrm{dim } \mathcal{N}_{\beta}(D,w, \alpha) = \textrm{dim } \mathcal{N}_{\beta}(D,w, \alpha)_x + \dimn \mathcal{N}(D,w, \alpha -\beta)\\ &= \textrm{dim } \mathcal{N}_{\beta}(D,w, \alpha)_x + g \cdot \sum_i (\beta^{(i)}_0)^2 - \sum_i q(\beta^{(i)}),
\end{align*}
for some $x = (\mathbf{H},h) \in \mathcal{N}(D, w, \alpha - \beta)$ and $\alpha - \beta = \sum_i \beta^{(i)}$.  Now, we wish to compute the dimension of the fiber $\mathcal{X} = \mathcal{N}_{\beta}(D,w, \alpha)_x$ of $\phi$.  

Let $\mathcal{P}_{\mathbf{V}} := \mathcal{P}_{\mathbf{V}}(D,w,\alpha)$ be the algebraic stack consisting of pairs $\{\mathbf{W}, i: \mathbf{V} \hookrightarrow \mathbf{W} \}$, where $i$ is an inclusion of parabolic bundles and $\mathbf{W}$ is a parabolic bundle of weight type $(D,w)$ and dimension vector $\alpha$.  Let $\mathbf{F_1} = \mathbf{ker} \ h$ and let $\mathcal{X}' = \mathcal{P}_{\mathbf{F_1}}(D, w, \beta)$.  We have two morphisms $\psi_1: \mathcal{X} \rightarrow \textrm{Bun}_{D,w, \beta}(X)$ and $\psi_2: \mathcal{X}' \rightarrow \textrm{Bun}_{D,w, \beta}(X)$, where $\psi_1$ sends the pair $(\mathbf{E},f)$ to $\textbf{ker } f$ and likewise $\psi_2$ sends $(\mathbf{F},i)$ to $\mathbf{F}$. 

The deformations of elements of the fiber $\mathcal{X}_{\mathbf{F}}$ are governed by the hypercohomology of the complex 
$$
\mathscr{H}om_{\textrm{Par}}(\mathbf{H}, \mathbf{F}) \xrightarrow{h} \mathscr{H}om_{\textrm{Par}}(\mathbf{H}, \mathbf{F}),
$$
defined in Lemma \ref{4.3.4}.  Therefore, by Lemma \ref{4.3.4}, we get that:
$$
\textrm{dim } \mathcal{X}_{\mathbf{F}} = \textrm{dim } H^1 (X, \mathscr{H}om_{\textrm{Par}}(\mathbf{F_1}, \mathbf{F})).
$$

Furthermore, since $f$ induces the injective morphism $\textbf{ker } f^2/\textbf{ker }f \rightarrow \textbf{ker } f$, then the fiber 
$\mathcal{X}'_{\mathbf{F}}$ is nonempty.  Therefore, 
$$
\textrm{dim } \mathcal{X}'_{\mathbf{F}} = \textrm{dim } H^0 (X, \mathscr{H}om_{\textrm{Par}}(\mathbf{F_1}, \mathbf{F})).
$$

Thus, $\textrm{dim } \mathcal{X}_{\mathbf{F}}  = \textrm{dim } \mathcal{X}'_{\mathbf{F}} - \chi(\mathscr{H}om_{\textrm{Par}}(\mathbf{F_1}, \mathbf{F}))$.  We have $\textrm{dim } \mathcal{X} = \textrm{dim } \mathcal{X}' - \chi(\mathscr{H}om_{\textrm{Par}}(\mathbf{F_1}, \mathbf{F}))$. So, we obtain
$$
\textrm{dim } \mathcal{N}_{\beta}(D,w, \alpha) \le \textrm{dim } \mathcal{X}'  - \chi(\mathscr{H}om_{\textrm{Par}}(\mathbf{F_1}, \mathbf{F})) + g \cdot \sum_i (\beta^{(i)}_0)^2 - \sum_i q(\beta^{(i)}).
$$
It follows from Lemma \ref{4.3.3} that $\dimn \mathcal{X}' = \chi(\mathscr{H}om_{\textrm{Par}}(\mathbf{F_1}, \mathbf{F})) -  \chi (\mathscr{E}nd_{\textrm{Par}}(\mathbf{F}))$, which means
$$
\textrm{dim } \mathcal{N}_{\beta}(D,w, \alpha) = -\chi (\mathscr{E}nd_{\textrm{Par}}(\mathbf{F})) + g \cdot \sum_i (\beta^{(i)}_0)^2 - \sum_i q(\beta^{(i)}).
$$
Since $\chi (\mathscr{E}nd_{\textrm{Par}}(\mathbf{F})) = - g \cdot \beta_0^2 + q(\beta)$ we obtain
$$
\textrm{dim } \mathcal{N}_{\beta}(D,w, \alpha) = g (\beta_0^2 + \sum_i (\beta^{(i)}_0)^2) - q(\beta) - \sum_i q(\beta^{(i)}).
$$
The result follows.
\end{proof}

The following corollary follows from the proof of Theorem \ref{3.2}.
\begin{cor}
\label{3.3}
There exists a decomposition into at least $2$ nonnegative dimension vectors $\alpha = \sum_{l=1}^s \beta^{(l)}$ such that $\dimn (\mathcal{N}(D,w,\alpha) - \mathcal{N}_{\alpha}(D,w,\alpha)) = g \cdot \sum_{l = 1}^s (\beta^{(i)}_0)^2 - \sum_{l = 1}^s q(\beta^{(i)})$
\end{cor}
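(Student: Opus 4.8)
The plan is to read the corollary directly off the stratification built in the proof of Theorem \ref{3.2}. Recall that there $\mathcal{N}(D,w,\alpha)$ was stratified by the locally closed substacks $\mathcal{N}_\beta(D,w,\alpha)$ indexed by the dimension vector $\beta$ of $\mathbf{F} = \mathbf{ker}\ f$, and that the dimension of a stratum was computed to be
$$
\dimn \mathcal{N}_\beta(D,w,\alpha) = g\Bigl(\beta_0^2 + \sum_i (\beta^{(i)}_0)^2\Bigr) - q(\beta) - \sum_i q(\beta^{(i)}),
$$
where $\alpha - \beta = \sum_i \beta^{(i)}$ is the decomposition furnished by the inductive hypothesis applied to $\mathbf{H} = \mathbf{E}/\mathbf{F}$. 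Thus each stratum dimension is expressed through the decomposition $\{\beta\}\cup\{\beta^{(i)}\}_i$ of $\alpha$, and crucially this computation was carried out for an arbitrary $\beta$, so the formula transfers verbatim to every stratum.

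First I would identify the removed piece. The substack $\mathcal{N}_\alpha(D,w,\alpha)$ is exactly the locus where $\mathbf{ker}\ f$ carries the full dimension vector $\alpha$, i.e. where $\mathbf{ker}\ f = \mathbf{E}$, equivalently $f = 0$; hence $\mathcal{N}_\alpha(D,w,\alpha) \cong \textrm{Bun}_{D,w,\alpha}(X)$. Consequently the complement $\mathcal{N}(D,w,\alpha) - \mathcal{N}_\alpha(D,w,\alpha)$ is precisely the union of the strata $\mathcal{N}_\beta(D,w,\alpha)$ over all proper $\beta < \alpha$, that is, the locus where $f$ is a nonzero nilpotent endomorphism.

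Next I would compute the dimension of this complement as the maximum of the stratum dimensions over proper $\beta$. Choosing $\beta < \alpha$ that realizes this maximum, the formula from the proof of Theorem \ref{3.2} gives
$$
\dimn\bigl(\mathcal{N}(D,w,\alpha) - \mathcal{N}_\alpha(D,w,\alpha)\bigr) = g\Bigl(\beta_0^2 + \sum_i (\beta^{(i)}_0)^2\Bigr) - q(\beta) - \sum_i q(\beta^{(i)})
$$
for the associated decomposition $\{\beta\}\cup\{\beta^{(i)}\}_i$ of $\alpha$. It then remains to check that this decomposition has at least two nonzero parts. Since a nonzero nilpotent endomorphism of a nonzero parabolic bundle has nonzero kernel, $\beta \neq 0$; and since $\beta < \alpha$ is proper, $\alpha - \beta \neq 0$, so at least one $\beta^{(i)} \neq 0$. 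Relabeling the nonzero parts as $\beta^{(1)}, \dots, \beta^{(s)}$ with $s \ge 2$ yields the claimed identity.

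The only delicate point I anticipate is confirming that the dimension of the complement is governed by a single realizing stratum with the formula inherited unchanged; this is immediate because the computation in Theorem \ref{3.2} was done stratum-by-stratum and nowhere used $\beta = \alpha$, so each $\mathcal{N}_\beta$ with $\beta < \alpha$ satisfies the same dimension formula. Hence the corollary does indeed follow directly from the proof of Theorem \ref{3.2}.
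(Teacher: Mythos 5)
Your proposal is correct and is exactly the argument the paper intends when it states that the corollary ``follows from the proof of Theorem \ref{3.2}'': you reuse the stratification by $\mathcal{N}_{\beta}(D,w,\alpha)$ and its stratum-by-stratum dimension formula, observe that removing $\mathcal{N}_{\alpha}(D,w,\alpha)$ (the locus $f=0$, i.e.\ $\mathbf{ker}\,f=\mathbf{E}$) restricts to strata with $0\neq\beta<\alpha$, and conclude that the resulting decomposition $\{\beta\}\cup\{\beta^{(i)}\}_i$ has $s\ge 2$ parts. This matches the paper's (implicit) proof in both structure and substance.
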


Putting together Lemma \ref{3.1} and Theorem \ref{3.2} we obtain the following corollary:
\begin{cor}
\label{3.4}
There exists a decomposition $\alpha = \sum_l \beta^{(l)}$ into positive dimension vectors such that $\dimn \mathcal{P}_{\mathcal{B}} = r + g \cdot \sum_{m = 1}^r \sum_{l = 1}^{s_m} (\beta^{(l)}_0)^2 - \sum_{m = 1}^r \sum_{l = 1}^{s_m} q(\beta^{(l)})$.
\end{cor}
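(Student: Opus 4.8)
The plan is to chain the two preceding results together; the statement is essentially their formal composition. First I would invoke Lemma~\ref{3.1} to fix a decomposition $\alpha = \sum_{m=1}^r \gamma^{(m)}$ into nonnegative dimension vectors for which
$$
\dimn \mathcal{P}_{\mathcal{B}} = r + \sum_{m=1}^r \dimn \mathcal{N}(D,w,\gamma^{(m)}).
$$
Since the $\gamma^{(m)}$ arise in the proof of Lemma~\ref{3.1} as the generalized eigenspaces of $f$ attached to the $r$ distinct eigenvalues, each has rank $\gamma^{(m)}_0 = m_m > 0$, so none of these summands is the zero vector.

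Next I would apply Theorem~\ref{3.2} to each $\gamma^{(m)}$ individually. This yields, for every $m$, a decomposition $\gamma^{(m)} = \sum_{l=1}^{s_m} \beta^{(m,l)}$ into nonnegative dimension vectors satisfying
$$
\dimn \mathcal{N}(D,w,\gamma^{(m)}) = g \cdot \sum_{l=1}^{s_m} (\beta^{(m,l)}_0)^2 - \sum_{l=1}^{s_m} q(\beta^{(m,l)}).
$$
Substituting these $r$ identities into the expression from Lemma~\ref{3.1} and interchanging the order of summation gives
$$
\dimn \mathcal{P}_{\mathcal{B}} = r + g \cdot \sum_{m=1}^r \sum_{l=1}^{s_m} (\beta^{(m,l)}_0)^2 - \sum_{m=1}^r \sum_{l=1}^{s_m} q(\beta^{(m,l)}),
$$
which is the asserted formula once the doubly-indexed family $\beta^{(m,l)}$ is relabeled as the $\beta^{(l)}$ appearing in the statement. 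Moreover $\alpha = \sum_m \gamma^{(m)} = \sum_{m,l} \beta^{(m,l)}$, so these vectors genuinely decompose $\alpha$.

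The one point requiring care is the passage from the \emph{nonnegative} decompositions produced by Lemma~\ref{3.1} and Theorem~\ref{3.2} to the \emph{positive} decomposition claimed in the statement. Here I would simply discard any summand $\beta^{(m,l)}$ that equals the zero dimension vector: because $q(0) = 0$ and $0^2 = 0$, removing such a term changes neither of the two double sums, and it does not affect the identity $\alpha = \sum_{m,l}\beta^{(m,l)}$; since a dimension vector of rank $0$ is forced to be the zero vector (each flag dimension is bounded by the rank), the surviving summands are all positive. I expect this bookkeeping—maintaining consistency between the two levels of indexing and verifying that dropping zero vectors is harmless—to be the only mildly delicate step, the substitution itself being immediate.
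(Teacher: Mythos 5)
Your proposal is correct and matches the paper exactly: the paper gives no separate argument for Corollary~\ref{3.4}, stating only that it follows by ``putting together Lemma~\ref{3.1} and Theorem~\ref{3.2},'' which is precisely the substitution-and-relabeling you carry out. Your additional care about discarding zero summands (harmless since $q(0)=0$ and a rank-$0$ dimension vector must vanish entirely) is a welcome touch, as the paper silently passes from nonnegative to positive dimension vectors.
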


Let $\mathcal{P}_{\mathcal{B}}^{i}$ be the components of $\mathcal{P}_{\mathcal{B}}$ corresponding to elements with $i$-dimensional endomorphism group.

\begin{lmm}
\label{3.5}
There exists a decomposition into nonnegative dimension vectors $\alpha = \sum_{l=1}^r \gamma^{(l)}$ such that one of the following holds:
\begin{enumerate}
\item We have $r = 1$ and $\dimn (\mathcal{P}_{\mathcal{B}} - \mathcal{P}_{\mathcal{B}}^{1}) = 1 + \dimn (\mathcal{N}(D,w,\alpha) - \mathcal{N}_{\alpha}(D,w,\alpha))$.
\item We have $r > 1$ and $\dimn (\mathcal{P}_{\mathcal{B}} - \mathcal{P}_{\mathcal{B}}^{1}) = r + \sum_{l=1}^r \dimn \mathcal{N}(D,w,\gamma^{(l)})$.
\end{enumerate}
\end{lmm}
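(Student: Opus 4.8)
The plan is to reprise the characteristic-polynomial stratification from the proof of Lemma \ref{3.1}, but restricted to the locus $\mathcal{P}_{\mathcal{B}} - \mathcal{P}_{\mathcal{B}}^{1}$. First I would record the identification of this locus: since $\dimn \textrm{End}_{\textrm{Par}}(\mathbf{E})$ attains its minimum value $1$ exactly when $\mathbf{E}$ is simple (in which case every endomorphism is a scalar), the complement $\mathcal{P}_{\mathcal{B}} - \mathcal{P}_{\mathcal{B}}^{1}$ is precisely $\{(\mathbf{E}, f) : \mathbf{E} \text{ is not simple}\}$. I would then apply the morphism $c : \mathcal{P}_{\mathcal{B}} \rightarrow \mathbb{A}^{\alpha_0}$ of Lemma \ref{3.1} and stratify the target by the number $r$ of distinct roots of the characteristic polynomial, so that $\dimn(\mathcal{P}_{\mathcal{B}} - \mathcal{P}_{\mathcal{B}}^{1})$ is the maximum of the dimensions of these strata.

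For the strata with $r > 1$, the eigenspace decomposition $\mathbf{E} = \bigoplus_l \mathbf{ker}(f - \lambda_l)^{m_l}$ has at least two nonzero summands, so the corresponding idempotent projections are non-scalar endomorphisms and $\mathbf{E}$ is automatically non-simple. Hence the entire fiber of $c$ lies in $\mathcal{P}_{\mathcal{B}} - \mathcal{P}_{\mathcal{B}}^{1}$, and the dimension count of Lemma \ref{3.1} applies verbatim to give the formula $r + \sum_{l=1}^r \dimn \mathcal{N}(D,w,\gamma^{(l)})$ of case (2). If the maximal stratum occurs for some $r > 1$, this is the desired decomposition.

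The remaining case is $r = 1$, where $f = \lambda \cdot \textrm{id} + n$ with $n$ nilpotent and $\lambda$ ranging over a one-dimensional space. Fixing the characteristic polynomial, the fiber is the non-simple locus of $\mathcal{N}(D,w,\alpha)$, which I would split according to whether $n$ vanishes: the locus $n \neq 0$ is automatically non-simple (a nonzero nilpotent endomorphism is never scalar) and equals $\mathcal{N}(D,w,\alpha) - \mathcal{N}_{\alpha}(D,w,\alpha)$, while the locus $n = 0$ is the copy $\{(\mathbf{E}, 0) : \mathbf{E} \text{ non-simple}\}$ of the non-simple locus of $\textrm{Bun}_{D,w,\alpha}(X)$. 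Accounting for the free parameter $\lambda$, the $r = 1$ stratum therefore has dimension $1 + \max\big(\dimn(\mathcal{N}(D,w,\alpha) - \mathcal{N}_{\alpha}(D,w,\alpha)),\ \dimn(\text{non-simple locus of } \textrm{Bun})\big)$, and to land in case (1) I must show the first term dominates.

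This last inequality is the main obstacle. I would argue that a non-simple parabolic bundle either admits a nonzero nilpotent endomorphism or is a direct sum $\bigoplus_i \mathbf{E}_i$ of pairwise non-isomorphic simple parabolic bundles with no nonzero homomorphisms between distinct summands (these are the only non-simple bundles whose endomorphism algebra contains no nilpotents). In the first case, the projection $(\mathbf{E}, n) \mapsto \mathbf{E}$ from $\mathcal{N}(D,w,\alpha) - \mathcal{N}_{\alpha}(D,w,\alpha)$ onto such bundles has fibers of positive dimension (the nonzero nilpotents form a cone), so their locus has strictly smaller dimension. In the second case, the sublocus of $\textrm{Bun}_{D,w,\alpha}(X)$ cut out by a fixed decomposition $\alpha = \sum_i \delta^{(i)}$ has dimension $g \sum_i (\delta^{(i)}_0)^2 - \sum_i q(\delta^{(i)})$, which is exactly the form of the dimension of $\mathcal{N}(D,w,\alpha) - \mathcal{N}_{\alpha}(D,w,\alpha)$ produced by Corollary \ref{3.3}; since that corollary realizes the maximum over all decompositions into at least two nonnegative vectors, this sublocus is again bounded by $\dimn(\mathcal{N}(D,w,\alpha) - \mathcal{N}_{\alpha}(D,w,\alpha))$. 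Combining the two cases gives the required bound $\dimn(\text{non-simple locus of } \textrm{Bun}) \leq \dimn(\mathcal{N}(D,w,\alpha) - \mathcal{N}_{\alpha}(D,w,\alpha))$, so the $r = 1$ stratum has dimension $1 + \dimn(\mathcal{N}(D,w,\alpha) - \mathcal{N}_{\alpha}(D,w,\alpha))$, which is case (1). The expected subtlety is to make the fiber-dimension and stack-dimension bookkeeping in these two sub-cases precise.
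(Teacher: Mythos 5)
Your stratification by the characteristic polynomial and your treatment of the $r>1$ strata coincide with the paper's own proof: there, too, one restricts the morphism $c$ of Lemma \ref{3.1} to $\mathcal{P}_{\mathcal{B}} - \mathcal{P}_{\mathcal{B}}^{1}$, observes that fibers over polynomials with $r>1$ distinct roots consist entirely of non-simple pairs, and quotes the dimension count of Lemma \ref{3.1}. The divergence is in the $r=1$ stratum. The paper compares the strata $\mathcal{P}_1^i$ and $\mathcal{N}^i$ (indexed by $i = \dimn \textrm{End}_{\textrm{Par}}(\mathbf{E})$) by asserting that for $i>1$ they have the same image in $\textrm{Bun}_{D,w,\alpha}(X)$, with fibers of dimension $i$ and $i-1$ respectively; your split of the fiber of $c$ into the loci $n \neq 0$ and $n = 0$ is a reformulation of that comparison, and you correctly isolate exactly what it hides: a non-simple parabolic bundle need not admit any nonzero nilpotent endomorphism (your type (b) bundles, i.e. Hom-orthogonal direct sums of non-isomorphic simples, whose endomorphism algebra is $\mathbb{C}^i$), so the claim that the $n=0$ locus does not dominate genuinely requires an argument.

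However, your argument for that claim has a gap. Corollary \ref{3.3} is an existence statement: $\dimn(\mathcal{N}(D,w,\alpha) - \mathcal{N}_{\alpha}(D,w,\alpha))$ equals $g\sum_l (\beta^{(l)}_0)^2 - \sum_l q(\beta^{(l)})$ for \emph{some} decomposition into at least two parts. It does not assert that this dimension is the \emph{maximum} of that expression over all such decompositions, which is what you invoke to bound the type (b) locus attached to an arbitrary decomposition $\alpha = \sum_i \delta^{(i)}$. The missing inequality --- that $\dimn(\mathcal{N}-\mathcal{N}_\alpha) \geq g\sum_i (\delta^{(i)}_0)^2 - \sum_i q(\delta^{(i)})$ for every decomposition realized by Hom-orthogonal sums of simples --- follows from nothing proved in the paper, and it is not cheap: to manufacture points of $\mathcal{N}-\mathcal{N}_\alpha$ of a given decomposition type one needs nonzero homomorphisms between the summands, and such homomorphisms are absent precisely on the locus you are trying to bound, so one would have to deform to other bundles of the same type or argue by a different mechanism. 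Until this inequality is established, case (1) is not proved. It is only fair to add that the paper's own proof is silently subject to the same issue --- its assertion that $\mathcal{N}^i$ and $\mathcal{P}_1^i$ have equal images fails exactly at type (b) bundles --- so you have located a real weak point in the argument; but your proposed repair, as written, does not close it.
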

\begin{proof}

Consider the morphism $c: \mathcal{P}_{\mathcal{B}} \rightarrow \mathbb{A}^{\alpha_0}$ in Lemma \ref{3.1} restricted to $(\mathcal{P}_{\mathcal{B}} - \mathcal{P}_{\mathcal{B}^1})$.  Following the proof of Lemma \ref{3.1}, we have 
$$
\dimn (\mathcal{P}_{\mathcal{B}} - \mathcal{P}_{\mathcal{B}^1}) = r + \dimn (\mathcal{P}_{\mathcal{B}} - \mathcal{P}_{\mathcal{B}^1})_x, 
$$
where $\dimn (\mathcal{P}_{\mathcal{B}} - \mathcal{P}_{\mathcal{B}^1})_x$ is some fiber of $c$.  If $r > 1$, we have that $\dimn \mathcal{P}_{\mathcal{B}} = \dimn (\mathcal{P}_{\mathcal{B}} - \mathcal{P}_{\mathcal{B}}^{1})$, so the result follows by Lemma \ref{3.1}.

Suppose r = 1.  From the proof of Lemma \ref{3.1}, we have:
$$ 
\dimn (\mathcal{P}_{\mathcal{B}} - \mathcal{P}_{\mathcal{B}}^1) = \dimn (\mathcal{P}_{1} - \mathcal{P}_{\mathcal{B}}^1),
$$
where $\mathcal{P}_{1}$ is the closed, reduced substack of $\mathcal{P}_{\mathcal{B}}$ corresponding to the pairs $(\mathbf{E},f)$ such that $\textrm{char}(f) = (t - \lambda)^{\alpha_0}$ for a $\lambda \in \mathbb{C}$.  We have that $\mathcal{P}_{1} = \coprod_i \mathcal{P}_1^i$, where the reduced, locally closed substack $\mathcal{P}_1^i$ corresponds to pairs $(\mathbf{E},f)$ in $\mathcal{P}_1$ such that $\mathbf{E}$ has $i$-dimensional endomorphism algebra.  Note that $\mathcal{P}_1^1 = \mathcal{P}_{\mathcal{B}}^1$.  

We also have $\mathcal{N}(D,w,\alpha) - \mathcal{N}_{\alpha}(D,w,\alpha) = \coprod_i \mathcal{N}^i$, where pairs in $\mathcal{N}^i$ correspond parabolic bundles with $i$-dimensional automorphism groups.  Note that  
$\mathcal{N}^i$ and $\mathcal{P}_1^i$ have the same image under the projections to $\textrm{Bun}_{D,w,\alpha}(X)$ for $i > 1$.  The dimensions of the fibers are, respectively, $i-1$ and $i$.  It follows that $\dimn \mathcal{P}_1^i = 1 + \dimn_i \mathcal{N}^i$.  Thus we have:
$$
\dimn (\mathcal{P}_{\mathcal{B}} - \mathcal{P}_{\mathcal{B}}^1) = \dimn (\mathcal{P}_{1} - \mathcal{P}_{\mathcal{B}}^1) = 1 + \dimn (\mathcal{N}(D,w,\alpha) - \mathcal{N}_{\alpha}(D,w,\alpha)). 
$$
The result follows.
\end{proof}

Recall that $\mathcal{I}_{\mathcal{B}}^i$ is the component of the inertia stack $\mathcal{I}_{\mathcal{B}}$ corresponding to elements with $i$-dimensional automorphism group.    It is easy to see that $\dimn \mathcal{I}_{\mathcal{B}}^i = \dimn \mathcal{P}_{\mathcal{B}}^{i}$.  As a consequence of Corollary \ref{3.3} and Lemma \ref{3.5}, we have:
\begin{cor}
\label{3.6}
There exists a decomposition $\alpha = \sum_l \beta^{(l)}$ into positive dimension vectors such that $\dimn (\mathcal{I}_{\mathcal{B}} - \mathcal{I}^1) = r + g \cdot \sum_{m = 1}^r \sum_{l = 1}^{s_m} (\beta^{(l)}_0)^2 - \sum_{m = 1}^r \sum_{l = 1}^{s_m} q(\beta^{(l)})$ and either $r > 1$ or $s_1 > 1$ holds.
\end{cor}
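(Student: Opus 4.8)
The plan is to reduce the entire statement to the computation of $\dimn(\mathcal{P}_{\mathcal{B}} - \mathcal{P}_{\mathcal{B}}^1)$ and then to case-split according to Lemma \ref{3.5}. Since we are told that $\dimn \mathcal{I}_{\mathcal{B}}^i = \dimn \mathcal{P}_{\mathcal{B}}^{i}$ for every $i$, we obtain at once $\dimn(\mathcal{I}_{\mathcal{B}} - \mathcal{I}^1) = \dimn(\mathcal{P}_{\mathcal{B}} - \mathcal{P}_{\mathcal{B}}^1)$, so it suffices to produce the advertised decomposition of $\alpha$ into positive dimension vectors together with the dimension formula for the right-hand side.

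First I would invoke Lemma \ref{3.5} to obtain a decomposition $\alpha = \sum_{l=1}^r \gamma^{(l)}$ and pass to the two cases it provides. In the case $r > 1$, Lemma \ref{3.5}(2) gives $\dimn(\mathcal{P}_{\mathcal{B}} - \mathcal{P}_{\mathcal{B}}^1) = r + \sum_{l=1}^r \dimn \mathcal{N}(D,w,\gamma^{(l)})$, and I would apply Theorem \ref{3.2} to each summand $\mathcal{N}(D,w,\gamma^{(m)})$ separately (equivalently, one may read off the formula directly from Corollary \ref{3.4}, since here $\dimn \mathcal{P}_{\mathcal{B}} = \dimn(\mathcal{P}_{\mathcal{B}} - \mathcal{P}_{\mathcal{B}}^1)$). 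For each $m$ this yields a further decomposition $\gamma^{(m)} = \sum_{l=1}^{s_m} \beta^{(l)}$ with $\dimn \mathcal{N}(D,w,\gamma^{(m)}) = g\sum_{l=1}^{s_m}(\beta_0^{(l)})^2 - \sum_{l=1}^{s_m} q(\beta^{(l)})$. Summing over $m$ and substituting produces exactly the claimed formula, the side condition $r > 1$ holds outright, and discarding any zero pieces leaves a decomposition of $\alpha$ into positive dimension vectors.

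In the remaining case $r = 1$, Lemma \ref{3.5}(1) gives $\dimn(\mathcal{P}_{\mathcal{B}} - \mathcal{P}_{\mathcal{B}}^1) = 1 + \dimn(\mathcal{N}(D,w,\alpha) - \mathcal{N}_\alpha(D,w,\alpha))$. Here I would apply Corollary \ref{3.3} rather than Theorem \ref{3.2}: it supplies a decomposition $\alpha = \sum_{l=1}^{s_1}\beta^{(l)}$ into \emph{at least two} nonnegative dimension vectors, so that $s_1 > 1$, with $\dimn(\mathcal{N}(D,w,\alpha) - \mathcal{N}_\alpha(D,w,\alpha)) = g\sum_{l=1}^{s_1}(\beta_0^{(l)})^2 - \sum_{l=1}^{s_1}q(\beta^{(l)})$. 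Substituting and reading the result with the single outer index $m = r = 1$ recovers the claimed formula, and now the side condition holds because $s_1 > 1$.

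The bookkeeping I expect to require the most care is the nested indexing: the outer sum over $m = 1, \dots, r$ records the pieces $\gamma^{(m)}$ coming from the characteristic-polynomial decomposition in Lemma \ref{3.5}, while the inner sum over $l = 1, \dots, s_m$ records the kernel-filtration decomposition of each nilpotent block supplied by Theorem \ref{3.2} (or by Corollary \ref{3.3} when $r = 1$). The only genuinely nontrivial input is ensuring the disjunction ``$r > 1$ or $s_1 > 1$'' always holds; this is precisely where the guarantee in Corollary \ref{3.3} that its decomposition has at least two parts becomes essential, since it rules out the degenerate possibility $r = 1$, $s_1 = 1$ that would otherwise leave the side condition unverified.
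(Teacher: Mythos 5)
Your proposal is correct and follows essentially the same route as the paper, which states Corollary \ref{3.6} as an immediate consequence of Corollary \ref{3.3} and Lemma \ref{3.5} together with the observation $\dimn \mathcal{I}_{\mathcal{B}}^i = \dimn \mathcal{P}_{\mathcal{B}}^{i}$; your case split on $r$, using Theorem \ref{3.2} (equivalently Corollary \ref{3.4}) when $r > 1$ and Corollary \ref{3.3} to secure $s_1 > 1$ when $r = 1$, is exactly the argument the paper leaves implicit. Your closing remark correctly identifies the ``at least two parts'' guarantee of Corollary \ref{3.3} as the one ingredient that rules out the degenerate case $r = 1$, $s_1 = 1$.
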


We can now state the main theorem.
\begin{thm}
\label{3.7}
Let $\alpha = \sum_l \beta^{(l)}$ be the decomposition in Corollary \ref{3.6}.  The moduli stack $\textrm{Bun}_{D,w,\alpha}(X)$ is almost good if and only if $r-1 + g \cdot \sum_{m = 1}^r \sum_{l = 1}^{s_m} (\beta^{(l)}_0)^2 - \sum_{m = 1}^r \sum_{l = 1}^{s_m} q(\beta^{(l)}) \le g \cdot \alpha_0^2 - q(\alpha)$.  It is almost very good if and only if the inequality is strict.
\end{thm}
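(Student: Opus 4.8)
The plan is to read off the stated inequality directly from the criterion of Theorem \ref{2.1}, using the dimension count already assembled in Corollary \ref{3.6}. First I would pin down the value of $m = \min_y \dim \textrm{Aut}(y)$ for $\mathcal{B} = \textrm{Bun}_{D,w,\alpha}(X)$. Since the central $1$-dimensional subgroup $\mathbb{G}_m \subset \textrm{GL}(\alpha_0)$ acts by scalars on every fiber and preserves all flags, every parabolic bundle has an automorphism group of dimension at least $1$; as a generic (stable) parabolic bundle of the given dimension vector has only scalar automorphisms, the minimum is attained and $m = 1$. This is precisely the value around which Section 3 was built: Corollary \ref{3.6} describes the locus $\mathcal{I}_{\mathcal{B}} - \mathcal{I}^1$ of points whose automorphism dimension exceeds $m = 1$.

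Next I would invoke Theorem \ref{2.1}, which states that $\mathcal{B}$ is almost good if and only if $\dim \coprod_{i>m} \mathcal{I}^i - m \le \dim \mathcal{B}$, with strictness giving the almost very good property. With $m = 1$ the open piece $\coprod_{i>1} \mathcal{I}^i$ coincides with $\mathcal{I}_{\mathcal{B}} - \mathcal{I}^1$, so the criterion becomes $\dim(\mathcal{I}_{\mathcal{B}} - \mathcal{I}^1) - 1 \le \dim \mathcal{B}$. Here I rely on the earlier-noted replacement of the inertia stack by $\mathcal{P}_{\mathcal{B}}$, valid because $\dim \textrm{End}_{\textrm{Par}}(\mathbf{E}) = \dim \textrm{Aut}(\mathbf{E})$ for these stacks, which yields $\dim \mathcal{I}^i_{\mathcal{B}} = \dim \mathcal{P}^i_{\mathcal{B}}$ and legitimizes the dimension formula for $\mathcal{I}_{\mathcal{B}} - \mathcal{I}^1$ produced in Corollary \ref{3.6}.

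Finally I would substitute the two established dimension formulas: the Riemann--Roch computation $\dim \mathcal{B} = g \cdot \alpha_0^2 - q(\alpha)$, and the value $\dim(\mathcal{I}_{\mathcal{B}} - \mathcal{I}^1) = r + g \cdot \sum_{m = 1}^r \sum_{l = 1}^{s_m} (\beta^{(l)}_0)^2 - \sum_{m = 1}^r \sum_{l = 1}^{s_m} q(\beta^{(l)})$ from Corollary \ref{3.6}. Plugging these into $\dim(\mathcal{I}_{\mathcal{B}} - \mathcal{I}^1) - 1 \le \dim \mathcal{B}$ and rearranging turns it verbatim into $r-1 + g \cdot \sum_{m = 1}^r \sum_{l = 1}^{s_m} (\beta^{(l)}_0)^2 - \sum_{m = 1}^r \sum_{l = 1}^{s_m} q(\beta^{(l)}) \le g \cdot \alpha_0^2 - q(\alpha)$, which is the almost good assertion; carrying the strict inequality through gives the almost very good assertion.

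I expect the only genuine subtlety to be the identification $m = 1$, i.e. confirming both that the minimum of $\dim \textrm{Aut}$ is \emph{attained} and that it equals one, which hinges on the existence of a parabolic bundle of dimension vector $\alpha$ with only scalar automorphisms. Once this bookkeeping point and the $\mathcal{I}_{\mathcal{B}} \leftrightarrow \mathcal{P}_{\mathcal{B}}$ identification are secured, the theorem follows by pure substitution, with no further geometric input.
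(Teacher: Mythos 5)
Your proposal is correct and follows essentially the same route as the paper's own proof: invoke Theorem \ref{2.1} with $m = 1$, identify $\coprod_{i>1}\mathcal{I}^i$ with $\mathcal{I}_{\mathcal{B}} - \mathcal{I}^1$, and substitute the dimension formulas from Corollary \ref{3.6} and the Riemann--Roch computation $\dimn \textrm{Bun}_{D,w,\alpha}(X) = g\cdot\alpha_0^2 - q(\alpha)$. Your explicit attention to why $m = 1$ (attainment of the minimum by a parabolic bundle with only scalar automorphisms) is in fact more careful than the paper, which uses this identification tacitly.
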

\begin{proof}
By Theorem \ref{2.1} the almost good property is equivalent to 
$$
\dimn(\mathcal{I}_{\mathcal{B}} - \mathcal{I}^1) - 1 \le \dimn \textrm{Bun}_{D,w,\alpha}(X) =  g \cdot \alpha_0^2 - q(\alpha).
$$
Replacing the inequality with a strict one, yields the almost very good property.  The theorem follows by Corollary \ref{3.6}.
\end{proof}

\begin{rmk}
\label{3.8}
Note that in the $g = 0$ case, the almost very good property follows is equivalent to the inequality $p(\alpha) > r + \sum_l -q(\beta^{(l)})$ for some decomposition $\alpha = \sum_l \beta^{(l)}$ into at least two positive dimension vectors.  If the parabolic structure is sufficiently elaborate, this follows from the inequality $p(\alpha) > \sum_{l = 1}^r p(\gamma^{(l)})$ for some $\alpha = \sum_l \gamma^{(l)}$, as shown in \cite{So2013}.

In the $g = 1$ case, to have the almost very good property we need the inequality 
$$r-1 + \sum_{m = 1}^r \sum_{l = 1}^{s_m} (\beta^{(l)}_0)^2 - \sum_{m = 1}^r \sum_{l = 1}^{s_m} q(\beta^{(l)}) < \alpha_0^2 - q(\alpha)$$ 
to hold for a specific decomposition $\alpha = \sum_l \beta^{(l)}$ into positive dimension vectors.  We may rewrite this as: 
\begin{align*}
 0 &< 1- r + \sum_{l \neq m} 2\beta^{(l)}_0\beta^{(m)}_0 - \frac{1}{2}\sum_{l \neq m} (\beta^{(l)}, \beta^{(m)})\\ 
   &= 1-r + \sum_{l\neq m}\sum_{i,j} (\beta^{(l)}_{ij} - \beta^{(l)}_{ij+1})\beta^{(m)}_{ij+1},
\end{align*}
where $(\cdot, \cdot)$ is the symmetrization of the Euler form.  It follows that a single point with nontrivial parabolic structure in the corresponding fiber is sufficient for the moduli stack $\textrm{Bun}_{D,w,\alpha}(X)$ to be almost good.  Two points is enough for it to be almost very good.

If $g > 1$, then it is easy to see, from a similar computation, no conditions need to be placed on the parabolic structures for the almost good or almost very good properties to hold.  This is consistent with Beilinson and Drinfeld's result in \cite{BD1991}.
\end{rmk}

\section{Computations}
This section consists of several deformation theoretic computations necessary in the proof of the main theorems.  As before, all parabolic bundles we consider are over the smooth projective curve $X$.  

The following complex governs the deformation theory of $(\mathbf{W}, i)$, for a fixed $\mathbf{V}$ and inclusion of parabolic bundles $i: \mathbf{V} \hookrightarrow \mathbf{W}$:
$$
C^{\bullet}: \mathscr{E}nd_{\textrm{Par}}(\mathbf{W}) \rightarrow \mathscr{H}om_{\textrm{Par}}(\mathbf{V}, \mathbf{W}).
$$
We compute its hypercohomology.
\begin{lmm}
\label{4.3.2}
We have that $\mathbb{H}^2(X, C^{\bullet}) = 0$.
\end{lmm}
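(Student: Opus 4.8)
The plan is to reduce the vanishing of $\mathbb{H}^2(X, C^{\bullet})$ to an ordinary sheaf-cohomology statement, exploiting that $X$ is one-dimensional. First I would run the hypercohomology spectral sequence of the two-term complex, with $C^0 = \mathscr{E}nd_{\textrm{Par}}(\mathbf{W})$ placed in degree $0$, $C^1 = \mathscr{H}om_{\textrm{Par}}(\mathbf{V}, \mathbf{W})$ in degree $1$, and $C^p = 0$ otherwise:
$$
E_1^{p,q} = H^q(X, C^p) \Longrightarrow \mathbb{H}^{p+q}(X, C^{\bullet}).
$$
Since only two columns are nonzero, this degenerates at $E_2$. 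The group $\mathbb{H}^2$ receives contributions only from $E_\infty^{2,0}$, $E_\infty^{1,1}$, $E_\infty^{0,2}$. Now $E_1^{2,0} = H^0(X, C^2) = 0$, and $E_1^{0,2} = H^2(X, \mathscr{E}nd_{\textrm{Par}}(\mathbf{W})) = 0$ because every coherent sheaf on a smooth projective curve has vanishing $H^2$. Since $E_1^{2,1} = H^1(X, C^2) = 0$, the term $E_2^{1,1}$ is the cokernel of $d_1 \colon H^1(X, C^0) \to H^1(X, C^1)$, which is the map $u_*$ induced by the restriction morphism $u \colon \phi \mapsto \phi \circ i$. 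Hence the computation collapses to
$$
\mathbb{H}^2(X, C^{\bullet}) \cong \operatorname{coker}\bigl(u_* \colon H^1(X, \mathscr{E}nd_{\textrm{Par}}(\mathbf{W})) \to H^1(X, \mathscr{H}om_{\textrm{Par}}(\mathbf{V}, \mathbf{W}))\bigr).
$$

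Next I would identify this cokernel with the first cohomology of a cokernel sheaf. Writing $\mathcal{Q} = \operatorname{coker}(u)$ and factoring $u$ through its image $\mathcal{I}$, the short exact sequences $0 \to \ker u \to \mathscr{E}nd_{\textrm{Par}}(\mathbf{W}) \to \mathcal{I} \to 0$ and $0 \to \mathcal{I} \to \mathscr{H}om_{\textrm{Par}}(\mathbf{V}, \mathbf{W}) \to \mathcal{Q} \to 0$, together with the vanishing of $H^2$ on a curve, show that $H^1(X, \mathscr{E}nd_{\textrm{Par}}(\mathbf{W})) \to H^1(X, \mathcal{I})$ is surjective and that the connecting map gives $\operatorname{coker}(u_*) \cong H^1(X, \mathcal{Q})$. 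Thus it suffices to prove $H^1(X, \mathcal{Q}) = 0$.

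The key geometric step is to show that $\mathcal{Q}$ is a torsion sheaf, i.e. is supported on a finite set. Away from the divisor $D$, the parabolic Hom sheaves coincide with the ordinary sheaves $\mathscr{H}om(W, W)$ and $\mathscr{H}om(V, W)$, so there $u$ is the usual restriction of endomorphisms along $i$. The quotient $\mathbf{W}/\mathbf{V}$ is coherent, hence locally free away from a finite set $Z$; at each point of $X \setminus (D \cup Z)$ the map $i$ is a split inclusion of vector bundles, and choosing a local splitting $W \cong V \oplus C$ realizes $\mathscr{H}om(W, W) \to \mathscr{H}om(V, W)$ as the (surjective) restriction-to-$V$ projection. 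Therefore $u$ is surjective on $X \setminus (D \cup Z)$, so $\mathcal{Q}$ is supported on the finite set $D \cup Z$. A coherent sheaf with finite support has no higher cohomology, whence $H^1(X, \mathcal{Q}) = 0$, and combining with the previous two steps gives $\mathbb{H}^2(X, C^{\bullet}) = 0$.

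The main obstacle I anticipate is the torsion claim for $\mathcal{Q}$: I must make sure the parabolic conditions, which are imposed only at the points of $D$, do not enlarge the support of the cokernel beyond a finite set, and that the local-splitting argument for the restriction of endomorphisms genuinely applies once one is away from $D$ and from the locus where $\mathbf{W}/\mathbf{V}$ fails to be locally free. Once generic surjectivity of $u$ is in hand, both the reduction $\mathbb{H}^2 \cong H^1(X, \mathcal{Q})$ and the vanishing of $H^1$ on a finitely supported sheaf are routine.
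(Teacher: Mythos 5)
Your proposal is correct, and while its first half coincides with the paper's reduction, the decisive step is genuinely different. The paper also reduces $\mathbb{H}^2(X, C^{\bullet})$ to the cokernel of $i^*\colon H^1(X, \mathscr{E}nd_{\textrm{Par}}(\mathbf{W})) \to H^1(X, \mathscr{H}om_{\textrm{Par}}(\mathbf{V}, \mathbf{W}))$ — it does this via the exact triangle $A^{\bullet} \to B^{\bullet} \to C^{\bullet}$ with $A^{\bullet}, B^{\bullet}$ concentrated in degree $1$, which is the same computation as your two-column spectral sequence, just packaged differently. Where you diverge is in proving the cokernel vanishes: the paper applies Serre duality, identifying $\mathbb{H}^2(X,C^{\bullet})^*$ with the kernel of the map $H^0(X, \mathscr{H}om_{\textrm{Par}}(\mathbf{W}, \mathbf{V})\otimes \Omega^1_X) \to H^0(X, \mathscr{E}nd_{\textrm{Par}}(\mathbf{W})\otimes \Omega^1_X)$ given by post-composition with $i$, which is injective at the sheaf level since $i$ is injective. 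You instead stay on the cohomological side: you pass to the cokernel sheaf $\mathcal{Q} = \operatorname{coker}(u)$, verify $\operatorname{coker}(u_*) \cong H^1(X, \mathcal{Q})$ using $H^2 = 0$ on a curve and the two short exact sequences through the image, and then show $\mathcal{Q}$ is torsion because $i$ is a locally split subbundle inclusion away from $D$ and the finite locus $Z$ where $W/V$ has torsion, so that $u$ is generically surjective. Both routes are sound, but they buy different things: the paper's dualization is shorter once one grants Serre duality for the parabolic Hom sheaves (which, strictly speaking, pairs parabolic with \emph{strongly} parabolic homs and requires a twist by $D$, a subtlety the paper elides), whereas your argument never dualizes and so sidesteps that delicacy entirely, at the cost of the geometric verification that the parabolic conditions — imposed only along the finite divisor $D$ — cannot enlarge the support of $\mathcal{Q}$, which you handle correctly by simply absorbing $D$ into the finite bad locus.
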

\begin{proof}
Consider the chain complexes
\begin{align*}
& A^{\bullet}: 0 \rightarrow \mathscr{E}nd_{\textrm{Par}}(\mathbf{W}) \\
& B^{\bullet}: 0 \rightarrow \mathscr{H}om_{\textrm{Par}}(\mathbf{V}, \mathbf{W}),
\end{align*}
which are nontrivial only in degree $1$.  Since $i$ induces the obvious chain map, we have an exact triangle $A^{\bullet} \rightarrow B^{\bullet} \rightarrow C^{\bullet}$, which gives rise to the long exact sequence for hypercohomology
$$
\cdots \rightarrow \mathbb{H}^2(X,A^{\bullet}) \rightarrow \mathbb{H}^2(X,B^{\bullet}) \rightarrow \mathbb{H}^2(X,C^{\bullet}) \rightarrow \mathbb{H}^3(X,A^{\bullet}) \rightarrow \cdots .
$$
Since $A^{\bullet}$ and $B^{\bullet}$ are only nontrivial in degree $1$, we have both that $\mathbb{H}^2(X,A^{\bullet}) = H^1(X, \mathscr{E}nd_{\textrm{Par}}(\mathbf{W}))$ and $\mathbb{H}^2(X,B^{\bullet}) = H^1(X, \mathscr{H}om_{\textrm{Par}}(\mathbf{V}, \mathbf{W}))$. We also obtain that $\mathbb{H}^3(X,A^{\bullet}) = 0$.  Hence, it follows that we have the exact sequence
$$
H^1(X, \mathscr{E}nd_{\textrm{Par}}(\mathbf{W})) \rightarrow H^1(X, \mathscr{H}om_{\textrm{Par}}(\mathbf{V}, \mathbf{W})) \rightarrow  \mathbb{H}^2(X,C^{\bullet}) \rightarrow 0.
$$
Therefore, it follows $\mathbb{H}^2(X,C^{\bullet})$ is the cokernel of the map induced by the inclusion $i^*: H^1(X, \mathscr{E}nd_{\textrm{Par}}(\mathbf{W})) \rightarrow H^1(X, \mathscr{H}om_{\textrm{Par}}(\mathbf{V}, \mathbf{W}))$.  Applying Serre Duality, we obtain that $\mathbb{H}^2(X,C^{\bullet})$ is isomorphic to the dual of the kernel of
$$
H^0(X, \mathscr{H}om_{\textrm{Par}}(\mathbf{W}, \mathbf{V})\otimes \Omega^1_X) \rightarrow H^0(X, \mathscr{E}nd_{\textrm{Par}}(\mathbf{W})\otimes \Omega^1_X).
$$
However, this map comes from the inclusion of $\mathscr{H}om_{\textrm{Par}}(\mathbf{W}, \mathbf{V}) \hookrightarrow \mathscr{E}nd_{\textrm{Par}}(\mathbf{W})$, which is induced by $i$.  Therefore, the map is injective, so the kernel is trivial.  Thus,  $\mathbb{H}^2(X,C^{\bullet}) = 0$.
\end{proof}

We can now compute the dimension of the stack $\mathcal{P}_{\mathbf{V}}(D,w,\alpha)$ defined in Theorem \ref{3.2}.

\begin{lmm}
\label{4.3.3}
Either $\mathcal{P}_{\mathbf{V}}(D,w,\alpha)$ is empty or we have 
$$\dimn \mathcal{P}_{\mathbf{V}}(D,w,\alpha) = \chi(\mathscr{H}om_{\textrm{Par}}(\mathbf{V}, \mathbf{W})) -  \chi (\mathscr{E}nd_{\textrm{Par}}(\mathbf{W})).$$
\end{lmm}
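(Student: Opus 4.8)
The plan is to read off the dimension of $\mathcal{P}_{\mathbf{V}}(D,w,\alpha)$ directly from the deformation theory of its points, using the complex $C^\bullet$ introduced at the start of this section. Assume $\mathcal{P}_{\mathbf{V}}(D,w,\alpha)$ is nonempty and fix a point $(\mathbf{W}, i)$. I regard $C^\bullet$ as the two-term complex concentrated in degrees $0$ and $1$, with $\mathscr{E}nd_{\textrm{Par}}(\mathbf{W})$ in degree $0$ and $\mathscr{H}om_{\textrm{Par}}(\mathbf{V}, \mathbf{W})$ in degree $1$, the differential being restriction along $i$ (that is, $\phi \mapsto \phi \circ i$). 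Since $\mathbf{V}$ is held fixed, this complex governs the deformations of the pair: the infinitesimal automorphisms of $(\mathbf{W}, i)$ are identified with $\mathbb{H}^0(X, C^\bullet)$, the tangent space to $\mathcal{P}_{\mathbf{V}}(D,w,\alpha)$ at $(\mathbf{W}, i)$ with $\mathbb{H}^1(X, C^\bullet)$, and the obstructions lie in $\mathbb{H}^2(X, C^\bullet)$.

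First I would invoke Lemma \ref{4.3.2}, which gives $\mathbb{H}^2(X, C^\bullet) = 0$. This shows the deformation problem is unobstructed, so $\mathcal{P}_{\mathbf{V}}(D,w,\alpha)$ is smooth at $(\mathbf{W}, i)$ and its dimension there equals $\dimn \mathbb{H}^1(X, C^\bullet) - \dimn \mathbb{H}^0(X, C^\bullet)$, the tangent dimension minus the dimension of the stabilizer. Because $\mathbb{H}^2(X, C^\bullet) = 0$, this difference is precisely $-\chi(X, C^\bullet)$, the negative of the hypercohomological Euler characteristic of $C^\bullet$.

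Next I would compute $\chi(X, C^\bullet)$ as an alternating sum over the terms of the complex. The hypercohomology spectral sequence, or equivalently the long exact sequence relating $\mathbb{H}^{\bullet}(X, C^\bullet)$ to the cohomology of the two terms (exactly as set up in the proof of Lemma \ref{4.3.2}), gives
$$
\chi(X, C^\bullet) = \chi(\mathscr{E}nd_{\textrm{Par}}(\mathbf{W})) - \chi(\mathscr{H}om_{\textrm{Par}}(\mathbf{V}, \mathbf{W})),
$$
since $\mathscr{E}nd_{\textrm{Par}}(\mathbf{W})$ sits in degree $0$ and $\mathscr{H}om_{\textrm{Par}}(\mathbf{V}, \mathbf{W})$ in degree $1$. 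Combining this with the previous step yields
$$
\dimn \mathcal{P}_{\mathbf{V}}(D,w,\alpha) = -\chi(X, C^\bullet) = \chi(\mathscr{H}om_{\textrm{Par}}(\mathbf{V}, \mathbf{W})) - \chi(\mathscr{E}nd_{\textrm{Par}}(\mathbf{W})),
$$
as claimed. Finally, by the Riemann--Roch computation recorded in Section 3 these two Euler characteristics depend only on the numerical data (the weight type, the dimension vector $\alpha$, and the fixed degrees), so the right-hand side is independent of the chosen point $(\mathbf{W}, i)$ and the formula holds over the whole stack.

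The main obstacle is the first step: correctly identifying the tangent and obstruction spaces of the pair $(\mathbf{W}, i)$ with $\mathbb{H}^1$ and $\mathbb{H}^2$ of $C^\bullet$, that is, verifying that $C^\bullet$ genuinely is the deformation complex of the inclusion problem in the parabolic setting (so that, for instance, $\mathbb{H}^0(X, C^\bullet) = \{\psi \in \textrm{End}_{\textrm{Par}}(\mathbf{W}) : \psi \circ i = 0\}$ really is the space of infinitesimal automorphisms of the pair). Once this identification and the vanishing of Lemma \ref{4.3.2} are in hand, the passage to Euler characteristics and the final computation are routine bookkeeping.
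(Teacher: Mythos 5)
Your proposal is correct and takes essentially the same route as the paper: both identify $\dimn \mathcal{P}_{\mathbf{V}}(D,w,\alpha)$ with $\dimn \mathbb{H}^1(X,C^{\bullet}) - \dimn \mathbb{H}^0(X,C^{\bullet})$, use the vanishing $\mathbb{H}^2(X,C^{\bullet}) = 0$ from Lemma \ref{4.3.2} to equate this with $-\chi(C^{\bullet})$, and then compute $\chi(C^{\bullet}) = \chi(\mathscr{E}nd_{\textrm{Par}}(\mathbf{W})) - \chi(\mathscr{H}om_{\textrm{Par}}(\mathbf{V},\mathbf{W}))$ by additivity of the Euler characteristic over the two terms of the complex. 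Your closing observation that Riemann--Roch makes the answer independent of the chosen point $(\mathbf{W},i)$ is a small useful addition that the paper leaves implicit.
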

\begin{proof}

Assume that $\mathcal{P}_{\mathbf{V}}$ is nonempty.  The dimension of $\mathcal{P}_{\mathbf{V}}$ is equal to the dimension of the corresponding tangent complex.  We compute its dimension  by considering the deformations of $(\mathbf{W}, i) \in \mathcal{P}_{\mathbf{V}}$.  These deformations are governed by the hypercohomology of the complex $C^{\bullet}$, defined above.  It follows that
$$
\textrm{dim } \mathcal{P}_{\mathbf{V}} = \textrm{dim } \mathbb{H}^1(X, C^{\bullet}) - \textrm{dim } \mathbb{H}^0(X, C^{\bullet}),
$$
since $\mathbb{H}^2(X, C^{\bullet}) = 0$ by Lemma \ref{4.3.2}. 

Let $\chi(D^{\bullet})$ denote the Euler characteristic of the hypercohomology of a complex of sheaves $D^{\bullet}$ and let $A^{\bullet}, B^{\bullet}$ be as in Lemma \ref{4.3.2}. Since $\chi (D^{\bullet})$ additive on exact triangles, we have that
$$
\chi(C^{\bullet})  = \chi(B^{\bullet}) - \chi(A^{\bullet}).
$$
Moreover, because $\chi(B^{\bullet}) =  - \chi(\mathscr{H}om_{\textrm{Par}}(\mathbf{V}, \mathbf{W}))$ and $\chi(A^{\bullet}) = -\chi (\mathscr{E}nd_{\textrm{Par}}(\mathbf{W}))$, we can simplify this to
$$
\chi(C^{\bullet}) = \chi (\mathscr{E}nd_{\textrm{Par}}(\mathbf{W})) - \chi(\mathscr{H}om_{\textrm{Par}}(\mathbf{V}, \mathbf{W})).
$$
By Lemma \ref{4.3.2},  $\textrm{dim } \mathcal{P}_{\mathbf{V}} = - \chi(C^{\bullet})$.  Thus,
$$
\textrm{dim } \mathcal{P}_{\mathbf{V}} = \chi(\mathscr{H}om_{\textrm{Par}}(\mathbf{V}, \mathbf{W})) -  \chi (\mathscr{E}nd_{\textrm{Par}}(\mathbf{W})).
$$
\end{proof}

Let $\mathbf{F}, \mathbf{G}$ be parabolic bundles, and let $h$ be an endomorphism of $\mathbf{G}$.  Let $D^{\bullet}$ be the following chain complex: 
$$
\mathscr{H}om_{\textrm{Par}}(\mathbf{G}, \mathbf{F}) \rightarrow \mathscr{H}om_{\textrm{Par}}(\mathbf{G}, \mathbf{F}),
$$
where the connecting map is induced by $h$.
\begin{lmm}
\label{4.3.4}
We can compute the following: $\dimn \mathbb{H}^1(X, D^{\bullet}) - \dimn \mathbb{H}^0(X, D^{\bullet}) = \dimn H^1(X, \mathscr{H}om_{\textrm{Par}}(\textbf{ker } h, \mathbf{F}))$.
\end{lmm}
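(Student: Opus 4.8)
The plan is to compute $\dimn \mathbb{H}^1(X, D^{\bullet}) - \dimn \mathbb{H}^0(X, D^{\bullet})$ from the hypercohomology of the two-term complex $D^{\bullet} = [\mathscr{H}om_{\textrm{Par}}(\mathbf{G}, \mathbf{F}) \to \mathscr{H}om_{\textrm{Par}}(\mathbf{G}, \mathbf{F})]$, whose differential is precomposition with $h$ (the only composition that typechecks, since $h$ is an endomorphism of $\mathbf{G}$), and then to identify the answer with $H^1$ of $\mathscr{H}om_{\textrm{Par}}(\ker h, \mathbf{F})$. Write $\mathcal{H} := \mathscr{H}om_{\textrm{Par}}(\mathbf{G}, \mathbf{F})$. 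Since both terms of $D^{\bullet}$ are equal to $\mathcal{H}$, additivity of the hypercohomology Euler characteristic gives $\chi(D^{\bullet}) = \chi(X, \mathcal{H}) - \chi(X, \mathcal{H}) = 0$. As $X$ is a curve and $D^{\bullet}$ has only two terms, $\mathbb{H}^i(X, D^{\bullet}) = 0$ for $i \notin \{0,1,2\}$, so $0 = \dimn \mathbb{H}^0 - \dimn \mathbb{H}^1 + \dimn \mathbb{H}^2$, i.e. $\dimn \mathbb{H}^1 - \dimn \mathbb{H}^0 = \dimn \mathbb{H}^2(X, D^{\bullet})$.

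Next I would read off $\mathbb{H}^2$ from the long exact sequence attached to the distinguished triangle $\mathcal{H}[-1] \to D^{\bullet} \to \mathcal{H} \xrightarrow{\circ h} \mathcal{H}$, exactly as in the proof of Lemma \ref{4.3.2}. Since $H^2(X, \mathcal{H}) = 0$ on a curve, the tail of this sequence reads $H^1(X, \mathcal{H}) \xrightarrow{(\circ h)_*} H^1(X, \mathcal{H}) \to \mathbb{H}^2(X, D^{\bullet}) \to 0$, so that $\mathbb{H}^2(X, D^{\bullet}) = \operatorname{coker}\bigl((\circ h)_* : H^1(X, \mathcal{H}) \to H^1(X, \mathcal{H})\bigr)$. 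It thus remains to identify this cokernel with $H^1(X, \mathscr{H}om_{\textrm{Par}}(\ker h, \mathbf{F}))$.

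For this, I would factor $h$ as $\mathbf{G} \xrightarrow{\pi} \mathbf{G}/\ker h \xrightarrow{\sim} \operatorname{im} h \hookrightarrow \mathbf{G}$. Precomposition with $h$ then factors, at the level of $\mathscr{H}om_{\textrm{Par}}(-,\mathbf{F})$, as the restriction $\rho : \mathscr{H}om_{\textrm{Par}}(\mathbf{G},\mathbf{F}) \to \mathscr{H}om_{\textrm{Par}}(\operatorname{im} h, \mathbf{F})$, followed by an isomorphism, followed by the injection $\pi^* : \mathscr{H}om_{\textrm{Par}}(\mathbf{G}/\ker h, \mathbf{F}) \hookrightarrow \mathscr{H}om_{\textrm{Par}}(\mathbf{G}, \mathbf{F})$. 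Applying $\mathscr{H}om_{\textrm{Par}}(-, \mathbf{F})$ to $0 \to \operatorname{im} h \to \mathbf{G} \to \mathbf{G}/\operatorname{im} h \to 0$ and using $H^2 = 0$, the restriction $\rho$ is surjective on $H^1$; hence $\operatorname{im}((\circ h)_*) = \operatorname{im}(\pi^*_*)$ on $H^1$ and $\operatorname{coker}((\circ h)_*) = \operatorname{coker}(\pi^*_*)$. Finally, applying $\mathscr{H}om_{\textrm{Par}}(-, \mathbf{F})$ to $0 \to \ker h \to \mathbf{G} \xrightarrow{\pi} \mathbf{G}/\ker h \to 0$ produces the long exact sequence whose $H^1$ tail is $H^1(\mathscr{H}om_{\textrm{Par}}(\mathbf{G}/\ker h, \mathbf{F})) \xrightarrow{\pi^*_*} H^1(\mathcal{H}) \to H^1(\mathscr{H}om_{\textrm{Par}}(\ker h, \mathbf{F})) \to 0$, again by $H^2 = 0$, identifying $\operatorname{coker}(\pi^*_*) = H^1(\mathscr{H}om_{\textrm{Par}}(\ker h, \mathbf{F}))$. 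Chaining these equalities yields the claim.

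I expect the main obstacle to be the exactness of $\mathscr{H}om_{\textrm{Par}}(-, \mathbf{F})$ on these sequences: the argument requires $\ker h$ and $\operatorname{im} h$ to be honest parabolic subbundles, with locally split inclusions compatible with the flags, so that the structural short exact sequences remain exact after applying $\mathscr{H}om_{\textrm{Par}}(-,\mathbf{F})$ and yield the long exact cohomology sequences used above. Since a kernel or image of a bundle endomorphism is \emph{a priori} only a subsheaf, and parabolic $\mathscr{H}om$ is in general only left exact, I would either check directly that in the relevant situation (where $h$ is nilpotent, as in Theorem \ref{3.2}) these are subbundles carrying the induced parabolic structure, or pass to saturations and verify the flag compatibility. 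Everything else is formal manipulation of the two long exact sequences together with the vanishing of $H^2$ on a curve.
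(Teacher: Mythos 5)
Your proposal is correct in substance but, after the shared first step, takes a genuinely different route from the paper's. Both arguments start the same way: the two terms of $D^{\bullet}$ coincide, so the hypercohomology Euler characteristic vanishes and the claim reduces to computing $\dimn \mathbb{H}^2(X, D^{\bullet})$. The paper then applies Serre duality to the whole complex: $\mathbb{H}^2(X, D^{\bullet})$ is dual to $\mathbb{H}^0$ of the complex $\mathscr{H}om_{\textrm{Par}}(\mathbf{F}, \mathbf{G}\otimes \Omega^1_X) \rightarrow \mathscr{H}om_{\textrm{Par}}(\mathbf{F}, \mathbf{G}\otimes \Omega^1_X)$ with differential induced by $h \otimes \mathrm{Id}$; that $\mathbb{H}^0$ is the sheaf-theoretic kernel $H^0(X, \mathscr{H}om_{\textrm{Par}}(\mathbf{F}, (\textbf{ker } h)\otimes \Omega^1_X))$ --- left exactness of $\mathscr{H}om_{\textrm{Par}}(\mathbf{F}, -)$ does all the work, with no exactness questions arising --- and a second application of Serre duality gives $H^1(X, \mathscr{H}om_{\textrm{Par}}(\textbf{ker } h, \mathbf{F}))$. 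You instead identify $\mathbb{H}^2$ with $\operatorname{coker}\bigl((\circ h)_* : H^1 \to H^1\bigr)$ and compute that cokernel directly by factoring $h$ through its coimage and chasing two long exact sequences, using only $H^2 = 0$ on a curve. What each buys: your route avoids Serre duality entirely, a real advantage here since the duality for parabolic bundles properly pairs parabolic homomorphisms with \emph{strongly} parabolic ones twisted by $\Omega^1_X(D)$ --- a subtlety the paper's proof passes over silently --- while the paper's route avoids the right-exactness questions for $\mathscr{H}om_{\textrm{Par}}(-, \mathbf{F})$ that you correctly flag. On those caveats: $\textbf{ker } h$ is automatically a subbundle (its quotient is isomorphic to $\operatorname{im} h \subseteq \mathbf{G}$, hence torsion-free on a smooth curve, hence locally free), and the failure of $\operatorname{im} h$ to be saturated, or the mismatch between the induced parabolic structure on $\operatorname{im} h$ and the coimage structure on $\mathbf{G}/\textbf{ker } h$, only perturbs your sheaf maps at finitely many points; the resulting torsion kernels and cokernels have vanishing $H^1$, so the surjectivity statements on $H^1$ survive. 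The one place where honest sheaf-level surjectivity is indispensable is the restriction $\mathscr{H}om_{\textrm{Par}}(\mathbf{G}, \mathbf{F}) \rightarrow \mathscr{H}om_{\textrm{Par}}(\textbf{ker } h, \mathbf{F})$ in your final sequence (a nonzero torsion cokernel there would inflate the dimension of the answer), and it holds because for any subspace of a fiber one can choose a complement compatible with a finite filtration, giving flag-compatible local splittings at the marked points. With these verifications your argument closes; the gap you flag is real but fillable, and your proof is a legitimate, somewhat longer alternative to the paper's.
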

\begin{proof}
Since $D^{\bullet}$ consists of two copies of $\mathscr{H}om_{\textrm{Par}}(\mathbf{G}, \mathbf{F})$ we can see (by the argument from Lemma \ref{4.3.2}) that the Euler characteristic for hypercohomology is $0$.  That is, we have:
$$
\dimn \mathbb{H}^1(X, D^{\bullet}) - \dimn \mathbb{H}^0(X, D^{\bullet}) = \dimn \mathbb{H}^2(X, D^{\bullet}).
$$
By Serre duality, $\mathbb{H}^2(X, D^{\bullet})$ is isomorphic to $\mathbb{H}^0$ for the complex
$$
\mathscr{H}om_{\textrm{Par}}(\mathbf{F}, \mathbf{G}\otimes \Omega^1_{X}) \rightarrow \mathscr{H}om_{\textrm{Par}}(\mathbf{F}, \mathbf{G}\otimes \Omega^1_{X}),
$$
where the connecting map is induced by $h\otimes \text{Id}$.  However, by definition, this is just: 
$$
H^0(X, \mathscr{H}om_{\textrm{Par}}(\mathbf{F}, (\textbf{ker } h)\otimes \Omega^1_{X})) \cong H^0(X, \mathscr{H}om_{\textrm{Par}}(\mathbf{F}, \textbf{ker } h)\otimes \Omega^1_{X}).
$$
Applying Serre duality, we get: 
$$
\dimn \mathbb{H}^1(X, D^{\bullet}) - \dimn \mathbb{H}^0(X, D^{\bullet}) = \dimn \mathbb{H}^2(X,D^{\bullet}) = \dimn H^1(\mathscr{H}om_{\textrm{Par}}(\textbf{ker } h, \mathbf{F})).
$$
\end{proof}

\bibliographystyle{hacm}
\bibliography{ds}{}

\end{document}